\newcommand{\abs}[1]{\left| #1 \right|}
\newtheorem{thm}{Theorem}
\newtheorem*{thm*}{Theorem}
\newtheorem{lem}{Lemma}
\newtheorem*{lem*}{Lemma}
\newtheorem{prop}{Proposition}
\theoremstyle{remark}
\theoremstyle{definition}
\newtheorem*{defin*}{Definition}
\newtheorem{defin}{Definition}
\newcommand{\norm}[1]{\left\lVert#1\right\rVert}
\renewcommand{\epsilon}{\varepsilon}
\newcommand{\bbe}{\mathbb{E}}
\title{A Note on Strongly Mixing Extensions}
\author{Mike Schnurr}
\address{Max Planck Institute for Mathematics in the Sciences, Inselstr. 22, 04103 Leipzig, Germany}
\email{schnurr@mis.mpg.de}
\begin{document}
	
\begin{abstract}
	Continuing the investigations by the author \cite{SchnurrWM} and Glasner and Weiss \cite{GlasnerWeiss} on generic properties of extensions, we give a sufficient condition for the strongly mixing extensions of a fixed transformation to be of first category.
\end{abstract}

\maketitle

\section{Introduction}
The study of generic properties of measure-preserving transformations was initiated by Halmos \cite{HalmosPaper} and Rokhlin \cite{Rokhlin}, see e.g. \cite{Nadkarni}, \cite{KatokStepin}, \cite{Ageev1}, \cite{King}, \cite{Ageev2}, \cite{AlpernPrasad}, \cite{Ageev3}, \cite{RueLazaro}, \cite{Ageev4}, \cite{Solecki}, \cite{Guiheneuf}. In particular, a generic measure-preserving transformation is rigid \cite{KatokStepin} and weakly mixing \cite{HalmosPaper}. One of the important tools was the Halmos Conjugacy Lemma.

\begin{lem}[Halmos Conjugacy Lemma] \label{Lem:ConjugacyLemma}
	Let $(X,\mu)$ be a measure space and let $T$ be an (invertible) antiperiodic measure-preserving transformation on $(X,\mu)$. Then 
	
	$$\{S^{-1}TS| S \text{ is an invertible measure-preserving transformation} \}$$ 
	
	is dense in the weak topology on the set of (invertible) measure preserving transformations on $X$. 
\end{lem}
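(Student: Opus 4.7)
A basic weak-topology neighborhood of an invertible MPT $R$ is determined by measurable sets $A_1, \ldots, A_k$ and $\epsilon > 0$ via $\{R' : \mu(R'A_i \triangle R A_i) < \epsilon \text{ for all } i\}$, so the goal is to produce an invertible MPT $S$ with $\mu((S^{-1}TS)A_i \triangle R A_i) < \epsilon$ for each $i$. Since the antiperiodic MPTs are themselves weakly dense (by a standard perturbation argument), I may assume without loss of generality that $R$ is also antiperiodic.

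The heart of the argument is Rokhlin's lemma, which is where the antiperiodicity hypothesis on $T$ enters. I would apply it to both $T$ and $R$ with the same large height $n > 4/\epsilon$ and error parameter $\epsilon/4$, yielding bases $F, F' \subset X$ such that the columns $\{T^j F\}_{j=0}^{n-1}$ and $\{R^j F'\}_{j=0}^{n-1}$ are each pairwise disjoint with unions of measure greater than $1 - \epsilon/4$. By slightly shrinking the larger base I arrange $\mu(F) = \mu(F')$.

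Next I would construct $S$ to intertwine the two towers. Fix any measure-preserving isomorphism $\phi : F' \to F$ (which exists since these are Lebesgue spaces of equal positive measure), define $S(R^j y) = T^j \phi(y)$ for $y \in F'$ and $0 \le j \le n-1$, and extend $S$ by any MPT between the two tower complements (which have equal total measure). A direct computation then gives $TS = SR$ on $\bigcup_{j=0}^{n-2} R^j F'$, a set of measure greater than $1 - 1/n - \epsilon/4 > 1 - \epsilon/2$; applying $S^{-1}$ to both sides, $S^{-1}TS$ agrees with $R$ on a set of measure greater than $1 - \epsilon/2$. A standard symmetric-difference estimate (if $U$ and $V$ are MPTs agreeing on a set of measure $1 - \delta$ then $\mu(UB \triangle VB) \leq 2\delta$ for every $B$) then gives $\mu((S^{-1}TS)A_i \triangle R A_i) < \epsilon$ for each $i$.

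The main obstacle is ensuring that $S$ is genuinely a well-defined invertible MPT on all of $X$; this reduces to the routine observation that the two tower complements have the same total measure and are isomorphic as Lebesgue spaces, plus verifying measurability of the level-by-level definition. The preliminary reduction to antiperiodic $R$ is the other delicate point, but it follows from the standard fact that the antiperiodic MPTs form a dense $G_\delta$ in the weak topology.
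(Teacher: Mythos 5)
Your argument is correct, but note that the paper does not actually prove this lemma --- it is quoted as classical background (Halmos) --- so the relevant comparison is with the textbook proof and with the paper's own proof of the relative analogue, Theorem \ref{Thm:ConjugacyFixed}. The classical route, and likewise the paper's proof of Theorem \ref{Thm:ConjugacyFixed}, uses a \emph{single} Rokhlin tower for the antiperiodic transformation $T$ (resp.\ $T_0$) and reduces the target $R$ to something combinatorially simple --- a dyadic permutation in Halmos, a piecewise constant skew product in Glasner--Weiss --- after which $S$ is built by matching the tower levels to the pieces of the target. You instead reduce $R$ to an antiperiodic transformation, take Rokhlin towers of the same height for both $T$ and $R$ with bases of equal measure, and let $S$ intertwine the two towers via $S(R^j y)=T^j\phi(y)$; the intertwining identity $SR=TS$ off the top level and the estimate $\mu(UB\triangle VB)\le 2\delta$ for maps agreeing off a set of measure $\delta$ are both verified correctly, and $S$ is a genuine invertible MPT since the two tower complements have equal measure in a Lebesgue space. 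The trade-off is that your symmetric two-tower construction is arguably cleaner, but it leans on two external inputs: Rokhlin's lemma applied to $R$ as well as to $T$, and the density of antiperiodic transformations, which is standard but is itself usually derived from the Weak Approximation Theorem plus a perturbation, i.e.\ exactly the ingredient the one-tower proof uses directly. Both routes are valid; yours is a genuinely different and correct decomposition of the problem.
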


Surprisingly, the study of generic properties of extensions began in 2017, originating from a question by Terence Tao, asking if a generic extension does not have a nontrivial intermediate nilfactor. While it was shown in \cite{SchnurrWM} that in the case of a non-fixed factor, a generic extension is relatively weakly mixing, but not relatively mixing, an analogue of the Halmos Conjugacy Lemma for the fixed factor case was done in \cite{GlasnerWeiss}. As a consequence, a generic extension is relatively weakly mixing.

In this note we give a sufficient condition for the strongly mixing extensions of a fixed transformation $T_0$ to form a first category set. We then show that if $T_0$ is either periodic, or rigid and antiperiodic, this condition is met. We then give a possible definition of rigid extensions and show that for any factor $T_0$, the rigid extensions of $T_0$ are $G_{\delta}$.

\textbf{Acknowledgments.} The author expresses gratitude to Jean-Paul Thouvenot for the inspiration that resulted in this note, as well as to Tanja Eisner and Philipp Kunde for helpful discussions. The support of the Max Planck Institute is also greatly appreciated.
	
\section{Notation and Definitions}	
Let $(X,m)$ be the unit interval with the Lebesgue measure, and $\mathcal{G}(X)$ denote the set of invertible measure-preserving transformations on $X$. We will also have $(Y,\nu)$ denote the unit interval with the Lebesgue measure. Let $(Z,\mu):= (X \times Y, m \times \nu).$ For $T_0 \in \mathcal{G}(X),$ denote by $\mathcal{G}_{T_0}$ the subset of $\mathcal{G}(Z)$ which are extensions of $T_0$ (through the natural projection onto $X$). Note that we will also use $T_0$ to represent the transformation $T_0 \times \mathds{1}_Y$. The topology on $\mathcal{G}(X)$ and $\mathcal{G}_{T_0}$ will always be assumed to be the weak topology. For $T\in \mathcal{G}_{T_0},$ let $\{T_x\}_{x \in X} \subset \mathcal{G}(Y)$ be such that $T(x,y)=(T_0x,T_xy).$

For $p \in \mathbb{N}, p >1,$ a $p$-adic interval of rank $k$ in $Y$ is an interval of the form 

$$\left(\frac{j}{p^k},\frac{j+1}{p^k}\right), 0 \le j < p^k,$$

and a $p$-adic set is a union of $p$-adic sets. A $p$-adic permutation (of rank $k$) is a permutatiion of the $p$-adic intervals of rank $k$.

\begin{lem}
	For all $p>1,$ the $p$-adic permuations are dense in $\mathcal{G}(Y).$
\end{lem}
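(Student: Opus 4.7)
The plan is to show that for any $T \in \mathcal{G}(Y)$, any finite family $A_1, \dots, A_n$ of measurable subsets of $Y$, and any $\epsilon > 0$, there is a $p$-adic permutation $S$ with $\nu(SA_i \triangle TA_i) < \epsilon$ for every $i$; this covers a basic weak neighborhood of $T$.

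First I would use that the algebra of $p$-adic sets is dense in the measure algebra of $Y$ (as the rank-$k$ $p$-adic intervals generate the Borel $\sigma$-algebra as $k \to \infty$): for $k$ large enough, each $A_i$ is within $\epsilon/4$ in measure of a union $\tilde A_i$ of rank-$k$ $p$-adic intervals. Labelling these intervals as $I_1, \dots, I_{p^k}$, the action of $T$ on this partition is encoded by the doubly stochastic matrix
$$M_{jl} \;:=\; p^k\, \nu\bigl(I_l \cap T I_j\bigr).$$

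Next I would fix $k' \gg k$ and build $S$ as a rank-$k'$ $p$-adic permutation realising a discrete version of this transport. Each $I_j$ contains exactly $p^{k'-k}$ rank-$k'$ sub-intervals, and any rank-$k'$ permutation is specified by an integer matrix $(n_{jl})$ with $\sum_l n_{jl} = \sum_j n_{jl} = p^{k'-k}$, where $n_{jl}$ is the number of sub-intervals of $I_j$ sent into $I_l$. Using Birkhoff's theorem to write $M = \sum_i \alpha_i P_i$ as a convex combination of permutation matrices and setting
$$n_{jl} \;=\; \sum_i \lfloor p^{k'-k}\alpha_i \rfloor (P_i)_{jl}$$
plus a small integer correction (to restore exact row and column sums), one obtains an integer doubly stochastic matrix with $|n_{jl} - p^{k'-k}M_{jl}|$ bounded independently of $k'$. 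Any bijection between sub-intervals realising the counts $n_{jl}$ defines a $p$-adic permutation $S$, and the pointwise estimate $\nu(SI_j \triangle TI_j) = O(p^{-k'})$ propagates (after summing over the $I_j$'s composing $\tilde A_i$) to $\nu(S\tilde A_i \triangle T\tilde A_i) < \epsilon/2$; combined with the measure-preservation of both $S$ and $T$ and the approximation $\tilde A_i \approx A_i$, this gives $\nu(SA_i \triangle TA_i) < \epsilon$.

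The main obstacle is the rounding step that manufactures honest integer row and column sums from the real doubly stochastic matrix $p^{k'-k}M$: naively rounding each entry need not preserve the marginals. Birkhoff's theorem sidesteps this cleanly, and the freedom to take $k'$ arbitrarily large (while keeping $k$ fixed) makes the $O(1)$ rounding error negligible at the scale $p^{k'-k}$. In the absence of Birkhoff one would fall back on a Hall's-marriage-type redistribution of remainders.
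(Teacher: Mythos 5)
The paper does not write out a proof; it defers entirely to Halmos's Weak Approximation Theorem (\cite{HalmosLectures}, p.~66) with ``dyadic'' replaced by ``$p$-adic,'' and your argument is in the same spirit: approximate by a transport matrix and realise it combinatorially. Your combinatorial core --- the doubly stochastic matrix, the Birkhoff decomposition, and the rounding of $p^{k'-k}M$ to integer counts with exact marginals --- is fine. The gap is the step ``$\nu(SI_j \triangle TI_j) = O(p^{-k'})$.'' What your construction actually guarantees is only that the \emph{masses} match cell by cell at the coarse rank $k$: $\abs{\nu(SI_j \cap I_l) - \nu(TI_j \cap I_l)} = O(p^{-k'})$ for each rank-$k$ interval $I_l$. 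This does not control the symmetric difference, because inside a single cell $I_l$ the sets $SI_j \cap I_l$ and $TI_j \cap I_l$ can have nearly equal measure and still be nearly disjoint. Concretely, take $T$ to be the transformation that swaps the first $p$-adic digit with the $N$-th for some large $N$, and $I_j$ a rank-$1$ interval: then $TI_j$ meets every $p$-adic interval of rank $k' < N$ in exactly the fraction $1/p$ of its measure, so \emph{every} rank-$k'$ permutation $S$ satisfies $\nu(SI_j \cap TI_j) = \nu(I_j)/p$ and hence $\nu(SI_j \triangle TI_j) = 2(1 - 1/p)\,\nu(I_j)$, no matter how the counts $n_{jl}$ are realised. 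Nothing in your argument ties the choice of $k'$ to the images $TI_j$; you choose $k$ only so that the $A_i$ are well approximated by rank-$k$ sets, and $k'$ only to beat the $O(1)$ rounding error, so this failure mode is not excluded.

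The repair is exactly the ingredient built into Halmos's proof: resolve the \emph{images} before building the permutation. Pick a rank $r$ so large that each $TI_j$ (equivalently each $T\tilde A_i$) is within the desired tolerance of a union of rank-$r$ $p$-adic intervals, record the transport against the rank-$r$ intervals $I'_{l'}$ via $\nu(TI_j \cap I'_{l'})$ (row sums $p^{-k}$, column sums $p^{-r}$; your Birkhoff-plus-rounding argument applies verbatim to this rectangular matrix), then take $k' \ge r$ and send approximately $p^{k'}\nu(TI_j \cap I'_{l'})$ rank-$k'$ subintervals of $I_j$ into $I'_{l'}$. Now $SI_j$ fills out the rank-$r$ approximation of $TI_j$ up to an error that is summable over cells, and the bound $\nu(SI_j \triangle TI_j) < \epsilon$ follows legitimately. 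With that modification your proof closes and coincides with the cited one.
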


The proof is nearly identical to the one found in \cite{HalmosLectures} in the proof of the Weak Approximation Theorem (p.66); indeed, up to replacing each instance of dyadic with $p$-adic, the proofs are identical.

Next we recall a couple of definitions. First, we say $T \in \mathcal{G}(X)$ is \emph{antiperiodic} if the set of periodic points for $T$ has measure zero. 

\begin{defin} \label{Def:Rigid}
	We say that $T \in \mathcal{G}(X)$ is \emph{rigid} if there exists a subsequence $\{n_k\}$ such that $T^{n_k} \to \mathds{1}_X$ strongly.
\end{defin}

Let $\mathcal{R}(X) \subset{G}(X)$ denote the set of rigid transformations on $X$. The following equivalent definition is well known.

\begin{prop} \label{Prop:RigidEquivalence}
	A measure-preserving transformation $T \in \mathcal{G}(X)$ is rigid if and only if there exists a subsequence $\{n_k\}$ such that for all $f,g \in L^2(X),$
	
	\begin{equation} \label{Eqn:Rigidity}
	\lim_{k \to \infty} \abs{\langle T^{n_k}f,g \rangle - \langle f,g \rangle}=0.
	\end{equation}
	
\end{prop}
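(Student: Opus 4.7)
The plan is to prove the two implications separately, exploiting the fact that a measure-preserving transformation $T$ induces a unitary (in particular isometric) operator on $L^2(X)$.

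For the forward direction, suppose $T^{n_k}\to \mathds{1}_X$ strongly, i.e.\ $\|T^{n_k}f-f\|_{L^2}\to 0$ for every $f\in L^2(X)$. Then by Cauchy--Schwarz,
\[
\abs{\langle T^{n_k}f,g\rangle-\langle f,g\rangle}=\abs{\langle T^{n_k}f-f,g\rangle}\le \|T^{n_k}f-f\|_{L^2}\,\|g\|_{L^2},
\]
which tends to $0$ as $k\to\infty$. This direction is purely formal and requires no special properties of $T$ beyond boundedness.

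For the backward direction, which is the substantive one, suppose $\langle T^{n_k}f,g\rangle\to\langle f,g\rangle$ for all $f,g\in L^2(X)$. The key observation is that since $T$ is measure-preserving and invertible, the Koopman operator $f\mapsto f\circ T^{-1}$ is unitary on $L^2(X)$, so in particular $\|T^{n_k}f\|_{L^2}=\|f\|_{L^2}$ for every $k$. Applying the hypothesis with $g=f$, I compute
\[
\|T^{n_k}f-f\|_{L^2}^{2}=\|T^{n_k}f\|_{L^2}^{2}-2\,\mathrm{Re}\,\langle T^{n_k}f,f\rangle+\|f\|_{L^2}^{2}=2\|f\|_{L^2}^{2}-2\,\mathrm{Re}\,\langle T^{n_k}f,f\rangle.
\]
Since $\langle T^{n_k}f,f\rangle\to\langle f,f\rangle=\|f\|_{L^2}^{2}$ as $k\to\infty$, the right-hand side tends to $0$, giving strong convergence $T^{n_k}f\to f$ for every $f\in L^2(X)$, as required by Definition \ref{Def:Rigid}.

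There is no real obstacle here; the statement is essentially the classical fact that, for a sequence of unitary operators on a Hilbert space, weak convergence to the identity along a subsequence is equivalent to strong convergence to the identity along that same subsequence. The only subtlety worth flagging is the choice of convention for the word ``strongly'' in Definition \ref{Def:Rigid}: I am reading it as strong operator convergence on $L^2(X)$, which is the standard convention in ergodic theory and the only one under which the proposition is a genuine equivalence (rather than a tautology or a strictly stronger statement).
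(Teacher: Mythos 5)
Your proof is correct and follows essentially the same route as the paper: the forward direction is the trivial weak-convergence observation, and the backward direction uses unitarity to upgrade weak convergence to strong convergence. The only difference is that you write out the polarization computation $\norm{T^{n_k}f-f}^2 = 2\norm{f}^2 - 2\,\mathrm{Re}\langle T^{n_k}f,f\rangle$ explicitly, where the paper simply cites the standard fact that weak convergence plus convergence of norms implies strong convergence.
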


\begin{proof}
	Equation (\ref{Eqn:Rigidity}) says that $T^{n_k} \to \mathds{1}_X$ weakly, so one direction is trivial. For the other direction, suppose there is a subsequence $\{n_k\}$ such that (\ref{Eqn:Rigidity}) holds. As $T$ is  unitary, clearly $\norm{T^{n_k}f} \to \norm{\mathds{1}_Xf}$ for all $f$, so $T^{n_k} \to \mathds{1}_X$ strongly.
\end{proof}

The next definition can be found in \cite{SchnurrWM}.

\begin{defin} \label{Def:StronglyMixingExtension}
	We say that $T \in \mathcal{G}_{T_0}$ is a \textit{(strongly) mixing extension} of $T_0$ or $T$ is \textit{(strongly) mixing relative to} $T_0$ if for all $f,g \in L^2(Z|X),$
	
	$$\lim_{n \to \infty} \norm{\bbe(T^nf \cdot \overline{g} |X) - T_0^n \bbe(f|X)\bbe(\overline{g}|X)}_{L^2(X)} = 0.$$
\end{defin}

	Let $\mathcal{S}_{T_0} \subset \mathcal{G}_{T_0}$ denote the set of strongly mixing extensions of $T_0.$ Lastly for definition reminders, we have the following, from \cite{GlasnerWeiss}.

\begin{defin} \label{Def:PiecewiseConstant}
	Let $T \in \mathcal{G}_{T_0}$. We say that $T$ is \emph{a piecewise constant skew product} over $T_0$ if there exists a partition of $X, \{A_1, \ldots, A_N \},$ and $\{R_1, \ldots, R_N \} \subset \mathcal{G}(Y)$ such that for $1 \le j \le N$ and for all $x \in A_j, T_x = R_j$.
\end{defin}

Lemma 1.3 in \cite{GlasnerWeiss} shows that the set of piecewise constanst skew products over $T_0$ is dense in $\mathcal{G}_{T_0}.$

\section{Stronly Mixing Extensions}
To fit the classical theory, we would like to prove that for all $T_0, \mathcal{S}_{T_0}$ is a first category subset of $\mathcal{G}_{T_0}$. Unfortunately, this goal proves to be more difficult than one would expect. We will instead be able to show that for a set of $T_0$ which is generic in $\mathcal{G}(X), \mathcal{S}_{T_0}$ is of first category. First we will give the Glasner and Weiss \cite{GlasnerWeiss} version of the Halmos Conjugacy Lemma. While the result and its proof (including much of their notation) can be found in their note, we present it with slightly more generality.

\begin{thm} \label{Thm:ConjugacyFixed}
	Let $T_0 \in \mathcal{G}(X)$ be antiperiodic, and $\hat{T} \in \mathcal{G}_{T_0}$. Then $\{S^{-1}\hat{T}S| S \in \mathcal{G}_{\mathds{1}_X} \}$ is dense in $\mathcal{G}_{T_0}.$
\end{thm}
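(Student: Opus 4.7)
The plan is to produce, for any $T \in \mathcal{G}_{T_0}$ and any weak-topology neighborhood of $T$, a conjugator $S \in \mathcal{G}_{\mathds{1}_X}$ with $S^{-1}\hat{T}S$ inside that neighborhood. Any such $S$ is determined by a measurable family $\{S_x\} \subset \mathcal{G}(Y)$ via $S(x,y) = (x, S_x y)$, so the fiber map of $S^{-1}\hat{T}S$ at $x$ is $S_{T_0 x}^{-1} \hat{T}_x S_x$. The goal therefore reduces to solving the cocycle recursion $S_{T_0 x} = \hat{T}_x S_x T_x^{-1}$ on as large a portion of $X$ as desired.

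By the density of piecewise constant skew products (Lemma 1.3 of \cite{GlasnerWeiss}), I may assume $T$ is piecewise constant: there is a partition $\{A_1, \ldots, A_N\}$ of $X$ and $R_1, \ldots, R_N \in \mathcal{G}(Y)$ with $T_x = R_j$ for $x \in A_j$. Antiperiodicity of $T_0$ then lets me apply Rokhlin's lemma: for any large $M$ and small $\delta > 0$, there exists $B \subset X$ such that $B, T_0 B, \ldots, T_0^{M-1} B$ are disjoint and of total measure at least $1 - \delta$. I then refine $B$ by the address map $x \mapsto (j_0, \ldots, j_{M-1})$ with $T_0^i x \in A_{j_i}$, obtaining sub-bases $B_\alpha$ each of whose level $T_0^i B_\alpha$ sits inside a single $A_{j_i}$. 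Consequently $T_x$ is constant along every level of the refined tower.

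Now I define $S$ by setting $S_x = \mathds{1}_Y$ on each $B_\alpha$ and off the tower, and propagating up each column via $S_{T_0 x} = \hat{T}_x S_x R_{j_i}^{-1}$ for $x \in T_0^i B_\alpha$ with $i < M-1$; measurability of $x \mapsto S_x$ is clear from the construction. By design, the fiber maps of $S^{-1}\hat{T}S$ and of $T$ agree on the union $W$ of the first $M-1$ levels, and $\mu(X \setminus W) \le \delta + 1/M$. Therefore, for every measurable $E \subset Z$,
\[ \mu\bigl((S^{-1}\hat{T}S)(E) \,\triangle\, T(E)\bigr) \le 2\mu(Z \setminus (W \times Y)) \le 2(\delta + 1/M), \]
a bound uniform in $E$ that defeats any prescribed weak-topology neighborhood of $T$ once $M$ is sufficiently large and $\delta$ sufficiently small.

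The main obstacle is organizational: the Rokhlin tower must be refined so as to respect the partition $\{A_j\}$ determining $T$, without which the factors $T_x$ entering the recursion would vary along a single column and the cocycle construction would fail to close up. Once this refinement is arranged, the recursion proceeds without further friction, and the inevitable defect between $S^{-1}\hat{T}S$ and $T$ is confined to the top level of the tower together with its (small) complement in $X$.
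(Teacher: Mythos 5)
Your proposal is correct and follows essentially the same route as the paper's own proof: reduction to a piecewise constant skew product via Lemma 1.3 of \cite{GlasnerWeiss}, a Rokhlin tower refined to respect the partition $\{A_j\}$, the inductive cocycle definition of $S$ up each column starting from $\mathds{1}_Y$ on the base, and an error estimate confined to the top level plus the tower's complement. No substantive differences to report.
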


\begin{proof}
	Let $T \in \mathcal{G}_{T_0}$ and let $\epsilon > 0$, and $N_{\epsilon}(T)$ a dyadic neighborhood of $T$. We wish to find $S$ such that $S^{-1} \hat{T}S \in N_{\epsilon}(T)$. Without loss of generality, we can assume that $T$ is a piecewise constant skew product over $T_0$. Let $\{A_1, \ldots, A_k \}$ be a partition of $X$ and $\{R_1, \ldots, R_k \} \subset \mathcal{G}(Y)$ such that for all $j$, and all $x \in A_j,T_x=R_j.$
	
	Now let $\{B, T_0B, \ldots, T_0^{n-1}B \}$ be a Rokhlin tower with respect to $\epsilon/2,$ and $1/n < \epsilon/2$, that is, $B, T_0B, \ldots,$ $T_0^{n-1}B$ are pairwise disjoint, and $m\left(\bigcup_{i=0}^{n-1}T_0^iB\right) > 1- \epsilon$. We now refine the tower with respect to the partition of $X, \{A_j| 1 \le j \le k\}$. This will create a partition of $B$ into sets $\{B_l | 1 \le l \le L\}$ such that for all $i, l,$ there exists $\alpha(l,i)$ between $1$ and $k$ such that $T_0^iB_l \subset A_{\alpha(l,i)}$.
	
	We will now define $S \in \mathcal{G}_{\mathds{1}_X},$ a piecewise constant skew product. First, for all $x \in B$ define $S_x := \mathds{1}_Y.$ Then, inductively, having defined $S$ for $0 \le i < n-1$ for all $l,$ define $S_x$ for $x \in T^{i+1}B_l$ (for all $l$) to be $S_x := \hat{T}_{T_0^{-1}x}S_{T_0^{-1}x}R^{-1}_{\alpha(l,i)}$. Note that this implies $S^{-1}_x\hat{T}_{T_0^{-1}x}S_{T_0^{-1}x} = R_{\alpha(l,i)}.$ This defines $S$ on the fibers corresponding to the Rokhlin tower. Finally, if $x \notin \bigcup_{i=0}^{n-1}T_0^iB,$ define $S_x:= \mathds{1}_Y$.
	
	We see that by how we constructed $S$, if $x \in \bigcup_{i=0}^{n-2}T_0^iB$ then $T_x := R_x= (S^{-1}\hat{T}S)_x.$ Thus it is easy to see that  for any measurable set $E \in Z,$
	
	$$\mu(TE \triangle S^{-1} \hat{T} S E) \le m\left(X \backslash \bigcup_{i=0}^{n-1}T_0^iB\right) + m(T^{n-1}B) < \frac{\epsilon}{2} + \frac{1}{n} < \epsilon,$$
	
	so certainly $S^{-1}\hat{T}S \in N_{\epsilon}(T).$
\end{proof}

Next, we give a sufficient condition for $\mathcal{S}_{T_0}$ to be of first category

\begin{lem} \label{Lem:RigidFactorImpliesRare}
	Let $T_0 \in \mathcal{G}(X)$ be fixed. If $\mathcal{R}(Z) \cap \mathcal{G}_{T_0}$ is dense in $\mathcal{G}_{T_0},$ then $\mathcal{S}_{T_0}$ is a first category subset of $\mathcal{G}_{T_0}.$ 
\end{lem}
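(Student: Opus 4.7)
The plan is to exhibit $\mathcal{S}_{T_0}$ as a subset of a countable union of closed nowhere dense sets, using the density of rigid extensions to force each of those sets to be nowhere dense. I would first fix a bounded observable $f \in L^2(Z)$ with $\bbe(f \mid X) = 0$ and $\bbe(|f|^2 \mid X) \equiv 1$---concretely, $f(x,y) = \mathds{1}_{\{y > 1/2\}} - \mathds{1}_{\{y < 1/2\}}$. Taking $g = f$ in Definition \ref{Def:StronglyMixingExtension}, strong mixing forces $\norm{\bbe(T^n f \cdot \overline{f} \mid X)}_{L^2(X)} \to 0$. Hence
\[
\mathcal{S}_{T_0} \ \subseteq\ \bigcup_{N \in \mathbb{N}} B_N, \qquad B_N := \left\{ T \in \gt : \norm{\bbe(T^n f \cdot \overline{f} \mid X)}_{L^2(X)} \le \tfrac{1}{2}\text{ for all } n \ge N\right\}.
\]

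The next step is to check that each $B_N$ is closed in the weak topology. This reduces to continuity of $T \mapsto \norm{\bbe(T^n f \cdot \overline{f} \mid X)}_{L^2(X)}$ for each fixed $n$. A standard telescoping argument shows $T_m^n f \to T^n f$ in $L^2(Z)$ whenever $T_m \to T$; boundedness of $f$ then upgrades this to convergence of $T_m^n f \cdot \overline{f}$ in $L^2(Z)$, and the $L^2$-contractivity of conditional expectation yields the required continuity.

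The crucial step is nowhere density, where the hypothesis enters. For any rigid $T \in \mathcal{R}(Z) \cap \gt$ with rigidity sequence $\{n_k\}$, one has $T^{n_k} f \to f$ in $L^2(Z)$, so (boundedness of $f$ again) $T^{n_k} f \cdot \overline{f} \to |f|^2$ in $L^2(Z)$, and hence $\bbe(T^{n_k} f \cdot \overline{f} \mid X) \to \bbe(|f|^2 \mid X) = 1$ in $L^2(X)$, giving norm convergence to $1 > \tfrac{1}{2}$. Thus no rigid extension lies in any $B_N$, so each $B_N$ is disjoint from the dense set $\mathcal{R}(Z) \cap \gt$ and, being closed, nowhere dense; first category of $\mathcal{S}_{T_0}$ then follows. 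The only point that requires genuine care is arranging continuity of the $L^2(X)$-norm of the conditional expectation in $T$, which is exactly why choosing $f$ bounded from the outset is worth the minor cosmetic cost.
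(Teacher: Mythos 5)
Your proof is correct and follows essentially the same route as the paper's: cover $\mathcal{S}_{T_0}$ by closed sets on which the fiberwise autocorrelation of a fixed fiber observable stays near its mixing limit for all large times, then use the dense set of rigid extensions (whose autocorrelations return to the time-zero value along the rigidity sequence) to show each such set is nowhere dense. The only differences are cosmetic: the paper works with $\chi_{X \times [0,1/2]}$ and the thresholds $9/20$ and $1/5$ where you use a mean-zero observable with cleaner arithmetic, and you prove closedness of the sets $B_N$ directly rather than citing it.
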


\begin{proof}
	Let $A := X \times [0, 1/2]$ and for all $k \in \mathcal{N}$ define
	
	$$P'_k := \left\{T \in \mathcal{G}_{T_0} | \mu(T^kA \cap A) > \frac{9}{20} \right\}.$$
	
	Note that for all $n, P_n := \bigcup_{k > n} P'_k$ is dense in $\mathcal{G}_{T_0}$ as $\mathcal{R}(Z) \cap \mathcal{G}_{T_0} \subset P_n$ for all $n$.
	
	Now define
	
	$$M'_k := \left\{T \in \mathcal{G}_{T_0} | \norm{\bbe(T^k \chi_A \cdot \chi_A | X) - \frac{1}{4}}_{L^2(X)} \le \frac{1}{5} \right\},$$
	
	and $M_n := \bigcap_{k > n} M'_k$. Note that as $\bbe(\chi_A |X) \equiv 1/2, \mathcal{S}_{T_0} \subset \bigcup_n M_n$. Further, as $M'_k$ is closed (see Theorem 6 in \cite{SchnurrWM}. Alternatively, see Proposition \ref{Prop:REGdelta} below), to prove $\mathcal{S}_{T_0}$ is of first category, it is sufficient to prove $M_n$ is dense for all $n$. But this is true as $P'_k \subset (M'_k)^c$ for all $k$. Indeed, if $T \in P'_k,$ then 
	
	\begin{align*}
	& \quad \ \norm{\bbe(T^k \chi_A \cdot \chi_A |X) - 1/4}_{L^2(X)} \ge \norm{\bbe(T^k \chi_A \cdot \chi_A |X) - 1/4}_{L^1(X)} \\
	&= \norm{T^k \chi_A \cdot \chi_A }_{L^2(Z)} - 1/4 > 9/20 - 1/4 = 1/5.
	\end{align*}
\end{proof}

While Lemma \ref{Lem:RigidFactorImpliesRare} is a nice sufficient condition, it certainly cannot be applied for all factors $T_0$. Indeed, if $T_0$ is strongly mixing, and if $T \in \mathcal{G}_{T_0}$ and $B$ is any $X$-measurable set, then for no subsequence ${n_k}$ does $T^{n_k} \chi_B \to \chi_B$. All the same, it will be an invaluable tool, as we will see in the following two results.

\begin{thm} \label{Thm:PeriodicFactorDense}
	If $T_0$ is periodic, then $\mathcal{S}_{T_0}$ is a first category subset of $\mathcal{G}_{T_0}$.
\end{thm}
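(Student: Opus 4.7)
By Lemma~\ref{Lem:RigidFactorImpliesRare}, it suffices to prove that $\mathcal{R}(Z)\cap\gt$ is dense in $\gt$. The plan is therefore: given $T\in\gt$ and a weak neighborhood of $T$, construct a periodic (hence rigid) element of $\gt$ inside that neighborhood. I will interpret ``$T_0$ periodic'' as meaning there is an $N$ with $T_0^N=\mathds{1}_X$; if the author means only that $m$-a.e.\ point of $X$ is periodic, one first passes to the invariant subset on which the period divides a fixed $N$, absorbing a small error on the complement into $\epsilon$.

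First I would apply Lemma~1.3 of \cite{GlasnerWeiss} to replace $T$ by a nearby piecewise constant skew product, with partition $\{A_1,\ldots,A_k\}$ of $X$ and fiber maps $T_x=R_j$ for $x\in A_j$. Next, using the density of $p$-adic permutations in $\mathcal{G}(Y)$ (the lemma in Section~2), I would choose a common rank $r$ and, for each $j$, a $p$-adic permutation $\tilde R_j$ of rank $r$ close to $R_j$. The piecewise constant skew product $\tilde T\in\gt$ defined by $\tilde T_x:=\tilde R_j$ on $A_j$ then shares the base $T_0$ with $T$, and standard estimates on $\mu(\tilde T E\triangle TE)$ summed over the finitely many pieces show $\tilde T$ can be made to sit inside the given weak neighborhood of $T$.

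The decisive step is to observe that $\tilde T$ is in fact periodic, not merely close to $T$. Since $T_0^N=\mathds{1}_X$, the iterate $\tilde T^N$ is an extension of $\mathds{1}_X$, so $\tilde T^N(x,y)=(x,\tilde S_x y)$ where $\tilde S_x$ is a product of $N$ of the $\tilde R_j$'s in an order determined by the $T_0$-orbit of $x$. Because the $p$-adic permutations of rank $r$ form a finite group isomorphic to $S_{p^r}$, each $\tilde S_x$ is itself a $p$-adic permutation of rank $r$ whose order divides $M:=(p^r)!$. Hence $\tilde T^{NM}=\mathds{1}_Z$, so $\tilde T$ is periodic, a fortiori rigid, and Lemma~\ref{Lem:RigidFactorImpliesRare} yields the theorem. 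I expect the main obstacle to be bookkeeping rather than mathematics: verifying that the simultaneous approximation $R_j\mapsto\tilde R_j$ on the $k$ pieces does not accumulate weak-topology error, and correctly interpreting ``periodic''; once these are controlled, the finiteness of the rank-$r$ $p$-adic permutation group together with $T_0^N=\mathds{1}_X$ forces a common finite order for $\tilde T$ essentially for free.
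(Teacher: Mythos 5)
Your proposal is correct and follows essentially the same route as the paper: reduce via Lemma~\ref{Lem:RigidFactorImpliesRare}, approximate by a piecewise constant skew product using Lemma 1.3 of \cite{GlasnerWeiss}, replace each fiber map by a nearby $p$-adic permutation of a common rank, and conclude periodicity of the resulting extension from the finiteness of the rank-$r$ permutation group together with $T_0^p=\mathds{1}_X$. In fact your order bound $p\cdot(p^r)!$ is the careful version of the paper's claim that $Q^{p^{M+1}}=I_Z$ (which, as literally stated, need not hold since a permutation of $p^M$ intervals can have order not dividing a power of $p$), so your write-up patches a small slip in the original.
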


\begin{proof}
	By Lemma \ref{Lem:RigidFactorImpliesRare}, we need only show that $\mathcal{R}(Z) \cap \mathcal{G}_{T_0}$ is dense in $\mathcal{G}_{T_0}$. Let $p$ be the smallest integer greater than 1 such that $T_0^p = I_X.$ Further fix $T \in \mathcal{G}_{T_0}, \epsilon > 0,$ and let $N_{\epsilon}(T)$ be a $p$-adic neighborhood in the weak topology. That is,
	
	$$N_{\epsilon}(T) := \{S \in \mathcal{G}_{T_0} | \mu(TD_{ij} \triangle SD_{ij}) < \epsilon, 1 \le i,j \le p^K \}$$
	
	where $D_{ij}$ are (without loss of generality) every $p$-adic square of rank $K \in \mathbb{N}$. Let $\{E_i\}, \{F_i\}$ be dyadic intervals of rank $K$ in $X$ and $Y$ respectively such that $D_{ij} = E_i \times F_j.$ 
	
	By Lemma 1.3 of \cite{GlasnerWeiss}, there exists $S \in N_{\epsilon/2}(T)$ that is a piecewise constant skew product over $T_0$. Let $\{A_1, \ldots A_N\}$ partition $X$ and $\{R_1, \ldots R_N \} \subset \mathcal{G}(Y)$ such that for all $x \in A_k, S_x = R_k.$
	
	Now for $V \in \mathcal{G}(Y),$ let 
	
	$$N'_{\epsilon}(V) := \{U \in \mathcal{G}(Y) | \nu(VF_i \triangle UF_i) < \epsilon, 1 \le i \le p^K \},$$
	
	and for each $R_k,$ let $P_k \in N'_{\epsilon/(2Nm(A_k))}(R_k)$ be a $p$-adic permutation. Define a new piecewise constant extension of $T_0, Q$ such that for all $x \in A_k, Q_x = P_k$. Fix $D_{ij}$. Then by Fubini,
	
	\begin{align*}
	&\quad \ \mu(SD_{ij} \triangle QD_{ij}) = \int_{E_i} \nu(S_xF_j \triangle Q_xF_j) dx \\
	&= \sum_{k=1}^N \nu(R_kF_j \triangle P_kF_j) m(A_k \cap E_i) < \sum_{k=1}^N \frac{\epsilon}{2N m(A_k)} m(A_k \cap E_i) \le \frac{\epsilon}{2}.
	\end{align*}
	
	Thus, $\mu(TD_{ij} \triangle QD_{ij}) \le \mu(TD_{ij} \triangle SD_{ij}) + \mu(SD_{ij} \triangle QD_{ij}) < \epsilon.$ As this holds for all $i,j, Q \in N_{\epsilon}(T).$ Let $M$ be the maximum of the ranks of all $P_k.$ Then $Q^{p^{M+1}}=I_Z,$ so $Q \in \mathcal{R}(Z) \cap \mathcal{G}_{T_0}$.
\end{proof}

\begin{thm} \label{Thm:AntiperiodicanRigidImpliesRare}
	Let $T_0 \in \mathcal{G}(X)$ be antiperiodic and rigid. Then $\mathcal{S}_{T_0}$ is a first category subset of $\mathcal{G}_{T_0}.$
\end{thm}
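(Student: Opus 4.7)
My plan is to invoke Lemma \ref{Lem:RigidFactorImpliesRare} and reduce the problem to showing that $\mathcal{R}(Z) \cap \mathcal{G}_{T_0}$ is dense in $\mathcal{G}_{T_0}$. The key observation is that the rigidity of $T_0$ supplies us with a concrete rigid element of $\mathcal{G}_{T_0}$ to feed into the conjugacy theorem, and that rigidity is preserved by conjugation.

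First I would note that the trivial extension $T_0 \times \mathds{1}_Y \in \mathcal{G}_{T_0}$ is itself rigid on $Z$. Indeed, if $\{n_k\}$ is a rigidity sequence for $T_0$, then the Koopman operator of $(T_0 \times \mathds{1}_Y)^{n_k}$ acts as $U_{T_0}^{n_k} \otimes I$ on $L^2(Z) = L^2(X) \otimes L^2(Y)$; it converges to the identity on elementary tensors $f \otimes g$, and since these operators are unitary, strong convergence extends by density to all of $L^2(Z)$. Hence $T_0 \times \mathds{1}_Y \in \mathcal{R}(Z) \cap \mathcal{G}_{T_0}$.

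Next, because $T_0$ is antiperiodic, Theorem \ref{Thm:ConjugacyFixed} applied with $\hat{T} := T_0 \times \mathds{1}_Y$ yields that the set
$$\{ S^{-1} (T_0 \times \mathds{1}_Y) S : S \in \mathcal{G}_{\mathds{1}_X} \}$$
is dense in $\mathcal{G}_{T_0}$. It remains only to observe that each conjugate lies in $\mathcal{R}(Z)$. This is immediate: along the same sequence $\{n_k\}$,
$$(S^{-1}(T_0 \times \mathds{1}_Y)S)^{n_k} = S^{-1}(T_0 \times \mathds{1}_Y)^{n_k} S \longrightarrow S^{-1} \mathds{1}_Z S = \mathds{1}_Z$$
strongly, since composition with the fixed unitaries $U_S$ and $U_S^*$ preserves strong operator convergence. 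Therefore $\mathcal{R}(Z) \cap \mathcal{G}_{T_0}$ contains a dense set, and Lemma \ref{Lem:RigidFactorImpliesRare} concludes the proof.

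I do not anticipate a genuine obstacle here: the content of the theorem is essentially packaged into the two earlier results (the conjugacy theorem for fixed factors and the rigidity-implies-first-category lemma), with rigidity of $T_0$ bridging them by producing a seed rigid extension to conjugate. The only care needed is in verifying that rigidity passes through tensoring with the identity and through conjugation in $\mathcal{G}_{\mathds{1}_X}$, both of which are one-line arguments about the Koopman operators.
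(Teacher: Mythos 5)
Your proposal is correct and follows essentially the same route as the paper: produce a rigid element of $\mathcal{G}_{T_0}$ (the paper likewise notes that $T_0 \times \mathds{1}_Y$ witnesses nonemptiness of $\mathcal{R}(Z) \cap \mathcal{G}_{T_0}$), conjugate it densely via Theorem \ref{Thm:ConjugacyFixed}, observe that conjugation preserves rigidity, and invoke Lemma \ref{Lem:RigidFactorImpliesRare}. The only cosmetic difference is that you work with the explicit seed $T_0 \times \mathds{1}_Y$ throughout, while the paper conjugates an arbitrary element of $\mathcal{R}(Z) \cap \mathcal{G}_{T_0}$.
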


\begin{proof}
	Let $T \in \mathcal{R}(Z) \cap \mathcal{G}_{T_0}.$ We know this set is nonempty as the rigidity of $T_0$ on $X$ implies the rigidity of $T_0 \times \mathds{1}_Y$ on $Z$. Now consider the set
	
	$$\{S^{-1}TS | S \in \mathcal{G}_{\mathds{1}_X} \}.$$
	
	By Theorem \ref{Thm:ConjugacyFixed}, this set is dense in $\mathcal{G}_{T_0}.$ Further, as $S$ can be viewed as an element of $\mathcal{G}(Z),$ we know that for each $S, S^{-1}TS \in \mathcal{R}(Z).$ Thus by Lemma \ref{Lem:RigidFactorImpliesRare}, $\mathcal{S}_{T_0}$ is of first category.
\end{proof}

To close this section we show that, though it is not completely clear for exactly which set of $T_0 \mathcal{S}_{T_0}$ is of first category, we are able to show a step in that direction.

\begin{prop} \label{Prop:SMECodense}
	Let $T_0 \in \mathcal{G}(X)$ be antiperiodic. Then $\mathcal{S}_{T_0}^c$ is dense in $\mathcal{G}_{T_0}.$
\end{prop}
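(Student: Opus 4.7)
The plan is to combine the Glasner--Weiss conjugacy theorem (Theorem \ref{Thm:ConjugacyFixed}) with an explicit non-strongly-mixing extension of $T_0$, and then check that relative strong mixing is invariant under the conjugations used there. Start from the trivial extension $\hat{T} := T_0 \times \mathds{1}_Y \in \mathcal{G}_{T_0}$ and verify that $\hat{T} \notin \mathcal{S}_{T_0}$: with $f(x,y) = g(x,y) = \chi_{[0,1/2]}(y) - 1/2$ we have $\bbe(f|X) = \bbe(\bar{g}|X) = 0$ and $\hat{T}^n f = f$ for every $n$, so $\bbe(\hat{T}^n f \cdot \bar{g}|X) \equiv 1/4$, violating the limit in Definition \ref{Def:StronglyMixingExtension}.

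By Theorem \ref{Thm:ConjugacyFixed}, antiperiodicity of $T_0$ makes the conjugacy class $\{S^{-1} \hat{T} S : S \in \mathcal{G}_{\mathds{1}_X}\}$ dense in $\mathcal{G}_{T_0}$, so it suffices to show that no element of this class lies in $\mathcal{S}_{T_0}$. The crucial point is that any $S \in \mathcal{G}_{\mathds{1}_X}$ acts as $(x,y) \mapsto (x, S_x y)$ with each $S_x$ measure-preserving on $Y$, so Fubini applied fiberwise gives $\bbe(h \circ S^{\pm 1}|X) = \bbe(h|X)$ for all $h \in L^2(Z)$. Since the Koopman operator of $S$ is moreover an algebra homomorphism on $L^\infty$, we deduce, for $T' := S^{-1}\hat{T}S$,
$$\bbe\bigl((T')^n f \cdot \bar{g} \,\big|\, X\bigr) = \bbe\bigl(\hat{T}^n (f \circ S) \cdot (\bar{g} \circ S) \,\big|\, X\bigr),$$
together with $\bbe(f \circ S|X) = \bbe(f|X)$ and $\bbe(\bar{g} \circ S|X) = \bbe(\bar{g}|X)$.

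Since $(f, g) \mapsto (f \circ S, g \circ S)$ is a bijection of $L^2(Z) \times L^2(Z)$, these identities show that the convergence in Definition \ref{Def:StronglyMixingExtension} holds for $T'$ over all test pairs if and only if it holds for $\hat{T}$. In particular $T' \notin \mathcal{S}_{T_0}$, so the entire dense conjugacy class lies in $\mathcal{S}_{T_0}^c$, proving the proposition. The only step requiring real care is this conjugation-invariance computation -- specifically, checking that both sides of the mixing criterion transform in the same way under $f \mapsto f \circ S$ -- and this is exactly where it matters that the conjugating group is $\mathcal{G}_{\mathds{1}_X}$ rather than all of $\mathcal{G}(Z)$.
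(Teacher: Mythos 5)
Your proposal is correct and follows essentially the same route as the paper: both exhibit the trivial extension $T_0 \times \mathds{1}_Y$ as a non--strongly-mixing element (witnessed by a set/function with conditional expectation $1/2$ over $X$), and both then invoke Theorem \ref{Thm:ConjugacyFixed} after checking that the obstruction is preserved under conjugation by $\mathcal{G}_{\mathds{1}_X}$, via the identity $\bbe((S^{-1}TS)^n f \cdot \overline{g}\,|X) = \bbe(T^n(Sf)\cdot(S\overline{g})\,|X)$ and $\bbe(Sf|X)=\bbe(f|X)$. The only cosmetic difference is that the paper packages the obstruction as membership in an auxiliary conjugation-invariant class $\mathcal{A}_{T_0}$ defined by a quantitative non-mixing inequality along a subsequence, whereas you prove outright that membership in $\mathcal{S}_{T_0}$ itself is invariant under conjugation by $\mathcal{G}_{\mathds{1}_X}$.
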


\begin{proof}
	Let $\mathcal{A}$ denote the measurable subsets of $Z$ such that $A \in \mathcal{A}$ if and only if $\bbe(\chi_A |X) \equiv 1/2$. Further, define
	
	$$\mathcal{A}_{T_0} := \left\{T \in \mathcal{G}_{T_0} | \  \exists \ \{n_k\}, A \in \mathcal{A} \ s.t. \ \norm{\bbe(T^{n_k}\chi_A \cdot \chi_A |X)- \frac{1}{4}} \ge \frac{1}{5} \ \forall k \right\}.$$

	Note that for all $T_0, \mathcal{A}_{T_0}$ is nonempty as for $T = T_0 \times \mathds{1}_Y$ we can take $A$ as in the proof of Lemma \ref{Lem:RigidFactorImpliesRare}. Further note that $\mathcal{A}_{T_0} \subset \mathcal{S}_{T_0}^c$. Now fix $T \in \mathcal{A}_{T_0},$ as well as $A \in \mathcal{A},$ and $\{n_k\}$ corresponding to $T$. Now fix $S \in \mathcal{G}_{\mathds{1}_X}$ and define $f := S^{-1}\chi_A.$ Then we have

	$$\bbe((S^{-1}TS)^{n_k}f \cdot f |X)= \bbe(T^{n_k}(Sf) \cdot (Sf)|X) = \bbe(T^{n_k}\chi_A \cdot \chi_A |X).$$

	Thus we see that $S^{-1}TS \in \mathcal{A}_{T_0}.$ By Theorem \ref{Thm:ConjugacyFixed}, $\mathcal{A}_{T_0}$ is dense in $\mathcal{G}_{T_0}.$
\end{proof}

\section{Rigid Extensions}
In \cite{KatokStepin}, Katok and Stepin showed a generalization of Rokhlin's result that strongly mixing transformations are of first category, namely that rigid transformations are residual. Indeed, if a transformation is rigid, then it is not strongly mixing. One might think to try to show that the set of rigid extensions of $T_0$ are a dense, $G_{\delta}$ set. However, first one must know what it means for an extension to be rigid.

\begin{defin} \label{Def:RigidExtension}
	Let $T \in \mathcal{G}_{T_0}$. We say that $T$ is \emph{rigid relative to} $T_0$, or $T$ is a \emph{rigid extension} of $T_0$ if there exists a subsequence $n_k$ such that for all $f,g \in L^2(Z|X),$
	
	$$\lim_{k \to \infty} \norm{\bbe(T^{n_k}f \cdot \overline{g} | X)- \bbe(T_0^{n_k}f \cdot \overline{g}|X)}_{L^2(X)} = 0.$$ 
\end{defin}

Let $\mathcal{R}_{T_0} \subset \mathcal{G}_{T_0}$ denote the set of rigid extensions of $T_0$. Note that if $X$ were a single point, then Proposition \ref{Prop:RigidEquivalence} shows that relative rigidity coincides with classical rigidity. Just as rigid transformations are mutually exclusive with strongly mixing transformations, we see that the same is true of rigid and strongly mixing extensions.

\begin{lem} \label{Lem:REImpliesNotSME}
	For all $T_0 \in \mathcal{G}(X), \mathcal{S}_{T_0} \subset \mathcal{R}^c_{T_0}.$
\end{lem}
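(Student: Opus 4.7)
The plan is to run a direct ``strong mixing versus rigidity'' contradiction, modeled on the classical proof that a strongly mixing transformation is not rigid, but carefully choosing a test function that lives entirely on the fiber so that $T_0$ acts trivially on it.

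First I would suppose, for contradiction, that some $T \in \mathcal{G}_{T_0}$ lies in both $\mathcal{S}_{T_0}$ and $\mathcal{R}_{T_0}$, and pick $\phi \in L^2(Y)$ with $\int_Y \phi\, d\nu = 0$ and $\norm{\phi}_{L^2(Y)} > 0$ (for instance $\phi = \chi_{[0,1/2]} - \chi_{[1/2,1]}$). Define $f \in L^2(Z|X)$ by $f(x,y) := \phi(y)$. The point of this choice is twofold: on the one hand $\bbe(f|X) \equiv 0$, which will kill the right-hand side in the strong mixing definition; on the other hand, since $T_0$ acts on $Z$ as $T_0 \times \mathds{1}_Y$, the Koopman action satisfies $T_0^n f = f$ for every $n$, so any power of $T_0$ leaves $f$ (and hence $f \cdot \overline{f}$) invariant.

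Next I would apply the two hypotheses. From $T \in \mathcal{S}_{T_0}$ (Definition \ref{Def:StronglyMixingExtension}) applied with $g = f$ and using $\bbe(f|X) \equiv 0$, we get
\begin{equation*}
\lim_{n \to \infty} \norm{\bbe(T^n f \cdot \overline{f}\, |\, X)}_{L^2(X)} = 0.
\end{equation*}
From $T \in \mathcal{R}_{T_0}$ (Definition \ref{Def:RigidExtension}), there is a subsequence $n_k$ with
\begin{equation*}
\lim_{k \to \infty} \norm{\bbe(T^{n_k} f \cdot \overline{f}\, |\, X) - \bbe(T_0^{n_k} f \cdot \overline{f}\, |\, X)}_{L^2(X)} = 0.
\end{equation*}
Combining these two limits by the triangle inequality forces $\norm{\bbe(T_0^{n_k} f \cdot \overline{f}\, |\, X)}_{L^2(X)} \to 0$. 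But $T_0^{n_k} f = f$ because $f$ depends only on the $Y$-coordinate, so $\bbe(T_0^{n_k} f \cdot \overline{f}\, |\, X) = \bbe(|f|^2\, |\, X) \equiv \norm{\phi}_{L^2(Y)}^2$, which is a strictly positive constant. This contradicts the limit being $0$ and completes the proof.

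The argument is essentially routine once the test function is chosen correctly; the only mild subtlety is making sure the two notions of $T_0$-action agree, namely that $T_0 \in \mathcal{G}_{T_0}$ acting on $Z$ is $T_0 \times \mathds{1}_Y$ (as stated at the start of Section 2), so that functions of $y$ alone are $T_0$-invariant. With that understood, no further difficulty arises.
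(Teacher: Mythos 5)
Your proof is correct and takes essentially the same approach as the paper: the paper tests against $\chi_{X\times[0,1/2]}$ directly (getting the incompatible limits $1/4$ from mixing and $1/2$ from relative rigidity), while you test against its mean-zero version $\phi(y)=\chi_{[0,1/2]}-\chi_{[1/2,1]}$ (getting $0$ versus $\norm{\phi}_{L^2(Y)}^2$), but the mechanism --- a fiber-only function on which $T_0$ acts trivially, forcing a contradiction between the two limits --- is identical.
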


\begin{proof}
	Let $T \in \mathcal{S}_{T_0}$ and let $A = X \times [0,1/2].$ Note then that $\bbe(\chi_A |X) \equiv 1/2$, so for all  $n,  \bbe(T_0^n \chi_A \cdot \chi_A|X) \equiv 1/2$ and $T_0^n \bbe(\chi_A |X) \bbe(\chi_A |X) \equiv 1/4.$ Therefore,
	
	$$\lim_{n \to \infty} \norm{\bbe(T^{n} \chi_A \cdot \chi_A | X)-\frac{1}{4}}_{L^2(X)} = 0.$$
	
	Subsequently for all subsequence $\{n_k\},$
	
	$$\lim_{k \to \infty} \norm{\bbe(T^{n_k} \chi_A \cdot \chi_A | X)- \frac{1}{2}}_{L^2(X)} \ge \frac{1}{4} > 0.$$ 
\end{proof}

In fact, we can say something even stronger. Letting $\mathcal{A}_{T_0}$ be as in the proof of Proposition \ref{Prop:SMECodense}, then it's easy to see that $\mathcal{R}_{T_0} \subset \mathcal{A}_{T_0}.$ Thus we have $\mathcal{S}_{T_0} \subset \mathcal{A}^c_{T_0} \subset \mathcal{R}^c_{T_0}.$

With the additional note that $\mathcal{R}_{T_0}$ is always nonempty by taking $T_0 \times \mathds{I}_Y,$ it seems that $\mathcal{R}_{T_0}$ has all the properties we would expect from a relativization of rigidity. Unfortunately, one property may fail. Consider $S \in \mathcal{G}_{\mathds{1}_X}.$ It may well fail to be the case that for $T \in \mathcal{R}_{T_0}, S^{-1}TS \in \mathcal{R}_{T_0}.$ Because of this, we cannot apply Theorem \ref{Thm:ConjugacyFixed}, and are unable to show $\mathcal{R}_{T_0}$ is dense. However, although we cannot show $\mathcal{R}_{T_0}$ is a dense, $G_{\delta},$ we can still show it is a $G_{\delta}.$ First, we prove the following technical lemma.

\begin{lem} \label{Lem:RigidExtensionL2Dense}
	Let $T \in \mathcal{G}_{T_0},$ and let $D \subset L^2(Z|X),$ with $\norm{\norm{f}_{L^2(Z|X)}}_{L^{\infty}(X)} \le 1$ for all $f \in D,$ such that $D$ is dense in the unit ball of $L^2(Z|X),$ with respect to the $L^2(Z)$ norm topology. Further suppose that there exists a subsequence $n_k$ such that for all $f_i,f_j \in D,$
	
	$$\lim_{k \to \infty} \norm{\bbe(T^{n_k}f_i \cdot \overline{f_j}|X) - \bbe(T_0^{n_k}f_i \cdot \overline{f_j}|X)}_{L^2(X)} = 0.$$
	
	Then $T \in \mathcal{R}_{T_0}.$
\end{lem}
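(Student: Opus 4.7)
The plan is a standard triangle-inequality and density argument. Since the expression $\bbe(T^{n_k} f \cdot \overline{g}|X) - \bbe(T_0^{n_k} f \cdot \overline{g}|X)$ is linear in $f$ and conjugate-linear in $g$, by a scaling argument it suffices to verify the rigidity condition for $f,g$ in the unit ball of $L^2(Z|X)$. Given $\epsilon>0$, I would use the hypothesized density of $D$ in this unit ball (in the $L^2(Z)$ norm) to pick $f_i,f_j\in D$ with $\norm{f-f_i}_{L^2(Z)}<\epsilon$ and $\norm{g-f_j}_{L^2(Z)}<\epsilon$. Writing $f\overline{g}-f_i\overline{f_j}=(f-f_i)\overline{g}+f_i\overline{(g-f_j)}$, I split
$$\bbe(T^{n_k}f\cdot\overline{g}|X)-\bbe(T_0^{n_k}f\cdot\overline{g}|X) = \bigl[\bbe(T^{n_k}f_i\cdot\overline{f_j}|X)-\bbe(T_0^{n_k}f_i\cdot\overline{f_j}|X)\bigr]+E_k,$$
where $E_k$ is a sum of four error terms, each of the form $\bbe(T^{n_k}(f-f_i)\cdot\overline{g}|X)$ or $\bbe(T^{n_k}f_i\cdot\overline{(g-f_j)}|X)$ (together with the two corresponding $T_0^{n_k}$-versions). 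The bracketed term tends to $0$ as $k\to\infty$ by the hypothesis applied to the pair $(f_i,f_j)\in D\times D$.

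The key technical step is bounding each of the four terms in $E_k$ by a multiple of $\epsilon$ in $L^2(X)$. For a typical term, fiberwise Cauchy--Schwarz gives
$$\abs{\bbe(T^{n_k}(f-f_i)\cdot\overline{g}|X)(x)}^2 \le \bbe(\abs{T^{n_k}(f-f_i)}^2|X)(x)\cdot\bbe(\abs{g}^2|X)(x).$$
Because $T^n(x,y)=(T_0^n x,\sigma_{n,x}(y))$ with $\sigma_{n,x}$ measure-preserving on $Y$, substituting into the inner $Y$-integral yields the shift identity
$$\bbe(\abs{T^{n_k}h}^2|X)(x) = \bbe(\abs{h}^2|X)(T_0^{n_k}x) \quad \text{for every } h\in L^2(Z).$$
Combining this with $\norm{g}_{L^2(Z|X)}\le 1$ a.e.\ and the $T_0$-invariance of $m$ gives
$$\norm{\bbe(T^{n_k}(f-f_i)\cdot\overline{g}|X)}_{L^2(X)}^2 \le \int_X \bbe(\abs{f-f_i}^2|X)(T_0^{n_k}x)\,dm(x) = \norm{f-f_i}_{L^2(Z)}^2 < \epsilon^2.$$
The other three error terms are bounded identically, using either $\norm{g}_{L^2(Z|X)}\le 1$ or $\norm{f_i}_{L^2(Z|X)}\le 1$ a.e.\ (the latter being precisely the hypothesis placed on elements of $D$); the $T_0^{n_k}$-versions are handled in the same way since $T_0$ also has the skew-product form $(x,y)\mapsto(T_0 x,y)$.

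Assembling all the estimates yields
$$\limsup_{k\to\infty}\norm{\bbe(T^{n_k}f\cdot\overline{g}|X)-\bbe(T_0^{n_k}f\cdot\overline{g}|X)}_{L^2(X)}\le 4\epsilon,$$
which forces the limsup to be $0$ since $\epsilon$ was arbitrary; hence $T\in\mathcal{R}_{T_0}$. The main obstacle, modest though it is, is verifying the shift identity $\norm{T^nh}_{L^2(Z|X)}(x)=\norm{h}_{L^2(Z|X)}(T_0^n x)$ and carefully juggling the conditional versus the full $L^2$ norm to turn $L^2(Z)$-approximation in $D$ into $L^2(X)$-approximation of the conditional expectations; beyond that, everything is routine triangle-inequality bookkeeping.
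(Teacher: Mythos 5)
Your proof is correct and follows essentially the same route as the paper: approximate $f,g$ by elements of $D$, split by the triangle inequality so the main term vanishes along $\{n_k\}$ by hypothesis, and control the approximation error uniformly in $n$ before sending $\epsilon \to 0$ and scaling. The only difference is that you prove the key uniform estimate $\norm{\bbe(T^{n}(f-f_i)\cdot\overline{g}|X)}_{L^2(X)} \le \norm{f-f_i}_{L^2(Z)}$ directly (via fiberwise Cauchy--Schwarz, the skew-product shift identity, and the $L^\infty$ fiberwise bounds), whereas the paper outsources exactly this step to Lemma 6 and Proposition 4 of \cite{SchnurrWM}.
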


\begin{proof}
	Fix $h,g$ in the unit ball in $L^2(Z|X)$ and let $\{h_j\}, \{g_j\} \subset D$ such that
	
	$$h_j \xrightarrow{L^2(Z)} h, g_j \xrightarrow{L^2(Z)} g.$$
	
	The proof of Lemma 6 in \cite{SchnurrWM} shows that (uniformly in $n$) $\bbe(T^nh_j \overline{g_j}|X) \to \bbe(T^nh \overline{g}|X),$ and $\bbe(T_0^n h_j \overline{g_j}|X) \to \bbe(T_0^nh \overline{g}|X)$ in $L^2(X)$. Now,
	
	\begin{align*}
	&\quad\ \norm{\bbe(T^n h \cdot \overline{g}|X) - \bbe(T_0^n h \cdot \overline{g}|X)}_{L^2(X)} \\
	&\le \ \norm{\bbe(T^n h \cdot \overline{g}|X) - \bbe(T^n h_j \cdot \overline{g_j}|X)}_{L^2(X)} \\
	&+ \ \norm{\bbe(T^n h_j \cdot \overline{g_j}|X) - \bbe(T_0^n h_j \cdot \overline{g_j}|X)}_{L^2(X)} \\
	&+ \ \norm{\bbe(T_0^n h_j \cdot \overline{g_j}|X) - \bbe(T_0^n h \cdot \overline{g}|X)}_{L^2(X)}.
	\end{align*}
	
	By hypothesis, there is a subsequence $\{n_k\}$ (independent of $j$) such that the middle term converges to 0. Further, the first and third terms converge to 0 as $j \to \infty$ uniformly in $n$, so 
	
	$$\lim_{k \to \infty} \norm{\bbe(T^{n_k}h \cdot \overline{g}|X) - \bbe(T_0^{n_k} h \cdot \overline{g}|X)}_{L^2(X)} = 0.$$
	
	By scaling this will also hold for all $h,g \in L^2(Z|X),$ so $T \in \mathcal{R}_{T_0}.$
\end{proof}

\begin{prop} \label{Prop:REGdelta}
	Let $T_0 \in \mathcal{G}(X)$ be fixed. Then $\mathcal{R}_{T_0}$ is a $G_{\delta}$ set.	
\end{prop}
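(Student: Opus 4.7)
The plan is to apply Lemma \ref{Lem:RigidExtensionL2Dense} to reduce membership in $\mathcal{R}_{T_0}$ to a countable family of conditions, and then express that reduced condition as a $G_{\delta}$ via a standard diagonalization together with a continuity argument.

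First, fix a countable set $D = \{f_i\}_{i \in \mathbb{N}} \subset L^2(Z|X)$ with $\norm{\norm{f_i}_{L^2(Z|X)}}_{L^{\infty}(X)} \leq 1$ for every $i$ and $D$ dense in the unit ball of $L^2(Z|X)$ in the $L^2(Z)$-norm; such $D$ exists by separability of $L^2(Z)$. By Lemma \ref{Lem:RigidExtensionL2Dense}, $T \in \mathcal{R}_{T_0}$ if and only if there is a subsequence $\{n_l\}$ along which
$$\norm{\bbe(T^{n_l} f_i \cdot \overline{f_j}|X) - \bbe(T_0^{n_l} f_i \cdot \overline{f_j}|X)}_{L^2(X)} \to 0$$
for every pair $i,j$. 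A diagonal argument (picking $n_l$ so the above is smaller than $1/l$ for all $i,j \leq l$) converts this into the equivalent statement: for every $m,k,N \in \mathbb{N}$ there exists $n \geq N$ such that the displayed quantity is less than $1/k$ simultaneously for all $i,j \leq m$.

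Next, for each quadruple $(n,i,j,k)$ define
$$O_{n,i,j,k} := \left\{T \in \mathcal{G}_{T_0} : \norm{\bbe(T^n f_i \cdot \overline{f_j}|X) - \bbe(T_0^n f_i \cdot \overline{f_j}|X)}_{L^2(X)} < 1/k\right\}.$$
The next step is to verify each $O_{n,i,j,k}$ is open in the weak topology. Weak convergence $T_{\alpha} \to T$ in $\mathcal{G}_{T_0}$ yields strong-operator convergence of the Koopman operators, hence $T_{\alpha}^n f_i \to T^n f_i$ in $L^2(Z)$; multiplication by $\overline{f_j} \in L^2(Z)$ then gives convergence in $L^1(Z)$; and $\bbe(\cdot|X)$ is a contraction $L^1(Z) \to L^1(X)$. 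The skew-product identity $\norm{T^n f_i}_{L^2(Z|X)}(x) = \norm{f_i}_{L^2(Z|X)}(T_0^n x)$ combined with conditional Cauchy--Schwarz bounds $\abs{\bbe(T^n f_i \cdot \overline{f_j}|X)} \leq 1$ almost everywhere, and this uniform $L^{\infty}(X)$ bound upgrades the $L^1(X)$-continuity of $T \mapsto \bbe(T^n f_i \cdot \overline{f_j}|X)$ to $L^2(X)$-continuity, so $O_{n,i,j,k}$ is indeed open.

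Combining the reduction with the openness gives
$$\mathcal{R}_{T_0} = \bigcap_{m,k,N \in \mathbb{N}} \bigcup_{n \geq N} \bigcap_{i,j \leq m} O_{n,i,j,k},$$
a countable intersection of (countable unions of finite intersections of) open sets, hence a $G_{\delta}$. I expect the main subtleties to be the diagonalization step and the bookkeeping in the $L^1(X) \to L^2(X)$ upgrade; the rest is a routine translation of the definition of $\mathcal{R}_{T_0}$ into the above nested form.
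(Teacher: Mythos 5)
Your proof is correct and follows the same overall strategy as the paper: reduce to a countable dense family via Lemma \ref{Lem:RigidExtensionL2Dense}, write $\mathcal{R}_{T_0}$ as a countable intersection of unions of sets of the form $\{T : \norm{\bbe(T^n f_i \cdot \overline{f_j}|X) - \bbe(T_0^n f_i \cdot \overline{f_j}|X)}_{L^2(X)} < 1/k\}$, and prove those sets are open by continuity of $T \mapsto \bbe(T^n f \cdot \overline{g}|X)$ in the weak topology. Two differences are worth noting. First, your expression $\bigcap_{m,k,N}\bigcup_{n \ge N}\bigcap_{i,j \le m} O_{n,i,j,k}$, with the finite intersection over $i,j \le m$ and the accompanying diagonal extraction, is actually more careful than the paper's displayed identity $\mathcal{R}_{T_0} = \bigcap_{i,j,k}\bigcup_{n>k} U_{i,j,k,n}$: read literally, the paper's right-hand side only asserts that for each \emph{individual} pair $(i,j)$ some subsequence works, whereas Definition \ref{Def:RigidExtension} (and the hypothesis of Lemma \ref{Lem:RigidExtensionL2Dense}) demands a single subsequence valid for all pairs simultaneously; your grouping of finitely many pairs per time $n$ is exactly what makes the reverse inclusion legitimate, so your version patches a small gap rather than introducing a complication. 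Second, for openness the paper bounds $\norm{\bbe(g\cdot(h-h_m)|X)}_{L^2(X)}$ directly by $\norm{\norm{g}_{L^2(Z|X)}}_{L^{\infty}(X)}\norm{h_m-h}_{L^2(Z)}$ (Proposition 4 of \cite{SchnurrWM}), which lands in $L^2(X)$ in one step, while you pass through $L^1(X)$ and upgrade using the a.e. bound $\abs{\bbe(T^n f_i\cdot\overline{f_j}|X)}\le 1$; both are valid, the paper's route being marginally shorter and yours being more self-contained (it only uses that conditional expectation is an $L^1$-contraction plus conditional Cauchy--Schwarz). Your explicit insistence that the countable dense set $D$ satisfy the fiberwise bound $\norm{\norm{f}_{L^2(Z|X)}}_{L^{\infty}(X)}\le 1$ is also needed to invoke Lemma \ref{Lem:RigidExtensionL2Dense} and is stated only implicitly in the paper.
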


\begin{proof}
	Let $\{f_i\}$ be an $L^2(Z)$-dense subset of $L^2(Z|X)$ and define
	
	$$U_{i,j,k,n} := \left\{T \in \mathcal{G}_{T_0} | \norm{\bbe(T^n f_i \cdot \overline{f_j}|X) -  \bbe(T_0^n f_i \cdot \overline{f_j}|X) }_{L^2(X)} < \frac{1}{k} \right\}.$$
	
	Lemma \ref{Lem:RigidExtensionL2Dense} shows that
	
	$$\mathcal{R}_{T_0} = \bigcap_{i,j,k} \bigcup_{n > k} U_{i,j,k,n},$$
	
	so we need only show $U_{i,j,k,n}$ are open. To this end, we instead show that the complement
	
	$$V(n,f,g,\epsilon) := \{S \in \mathcal{G}_{T_0} : \norm{\bbe(S^nf \cdot \overline{g} |X) - \bbe(T_0^n f \cdot \overline{g} | X)} \ge \epsilon \}$$
	
	is closed. Let $(S_m) \subset V(n,f,g,\epsilon)$ be a sequence of Koopman operators with $(S_m)$ converging weakly to a Koopman operator $S$. Note that this implies that $S_m \to S$ strongly (as Koopman operators are all isometries). 
	
	We claim that if we have functions $g,h,h_1, h_2, \ldots \in L^2(Z|X),$ and $h_m \to h$ in $L^2(Z),$ then $\bbe(h_mg|X) \to \bbe(hg|X)$ in $L^2(X).$ Indeed, by Proposition 4 in \cite{SchnurrWM},
	
	\begin{align*}
	\norm{\bbe(g \cdot h_m|X) - \bbe(g \cdot h|X)}_{L^2(X)} &= \norm{\bbe(g \cdot (h-h_m)|X)}_{L^2(X)} \\
	&\le \norm{\norm{g}_{L^2(Z|X)}}_{L^{\infty}(X)} \norm{h_m-h}_{L^2(Z)}.
	\end{align*}
	
	Lastly note that if $S_m \to S$ strongly, then $S_m^n \to S^n$ strongly.
	
	With these facts, we see that $\bbe(S_m^nf \cdot \overline{g} |X) - \bbe(T_0^n f \cdot \overline{g} | X)$ converges to $\bbe(S^nf \cdot \overline{g} |X) - \bbe(T_0^n f \cdot \overline{g} | X)$ strongly. Thus, $S \in V(n,f,g,\epsilon),$ and so $V(n,f,g,\epsilon)$ is closed.
\end{proof}

\nocite{GlasnerWeiss}
\nocite{SchnurrWM}

\bibliographystyle{plain}
\bibliography{StrongMixingExtensions} 

\begin{thebibliography}{10}

\bibitem{Ageev1}
O.N. Ageev.
\newblock The generic automorphism of a {L}ebesgue space conjugate to a
  {G}-extension for any finite abelian group {G}.
\newblock {\em Doklady Akademii Nauk}, 374:439--442, 2000.

\bibitem{Ageev2}
O.N. Ageev.
\newblock On the multiplicity function of generic group extensions with
  continuous spectrum.
\newblock {\em Ergodic Theory and Dynamical Systems}, 21:321--338, 2001.

\bibitem{Ageev3}
O.N. Ageev.
\newblock On the genericity of some nonasymptotic dynamic properties.
\newblock {\em Uspekhi Matematicheskikh Nauk}, 58:177--178, 2003.

\bibitem{Ageev4}
O.N. Ageev.
\newblock The homogeneous spectrum problem in ergodic theory.
\newblock {\em Inventiones mathematicae}, 160:417--446, 2005.

\bibitem{AlpernPrasad}
Steve Alpern and V.S. Prasad.
\newblock Properties generic for {L}ebesgue space automorphisms are generic for
  measure-preserving manifold homeomorphisms.
\newblock {\em Ergodic Theory and Dynamical Systems}, 22:1587--1620, 2002.

\bibitem{RueLazaro}
Thierry de~la Rue and Jose de~Sam~Lazaro.
\newblock Une transformation g\'{e}n\'{e}rique peut \^{e}tre ins\'{e}r\'{e}e
  dans un flot.
\newblock {\em Annales de l'Institut Henri Poincare (B) Probability and
  Statistics}, 39:121--134, 2003.

\bibitem{GlasnerWeiss}
Eli Glasner and Benjamin Weiss.
\newblock Relative weak mixing is generic.
\newblock 2017.
\newblock https://arxiv.org/abs/1707.06425.

\bibitem{Guiheneuf}
Pierre-Antoine Guihéneuf.
\newblock Dynamical properties of spatial discretizations of a generic
  homeomorphism.
\newblock {\em Ergodic Theory and Dynamical Systems}, 35:1474--1523, 2015.

\bibitem{HalmosPaper}
Paul Halmos.
\newblock In {G}eneral a {M}easure {P}reserving {T}ransformation is {M}ixing.
\newblock {\em Annals of Mathematics}, 45(4):786--792, October 1944.

\bibitem{HalmosLectures}
Paul Halmos.
\newblock {\em Lectures on Ergodic Theory}.
\newblock Chelsea Publishing Company, 1956.

\bibitem{KatokStepin}
A.B. Katok and A.M. Stepin.
\newblock Metric properties of homeomorphisms that preserve measure.
\newblock {\em Uspekhi Matematicheskikh Nauk}, 25:193--220, 1970.

\bibitem{King}
JLF King.
\newblock The generic transformation has roots of all orders.
\newblock {\em Colloquium Mathematicae}, 84/85:521--547, 2000.

\bibitem{Nadkarni}
M.G. Nadkarni.
\newblock {\em Spectral theory of dynamical systems}, chapter 8. Baire Category
  Theorems of Ergodic Theory, pages 51--61.
\newblock Birkhäuser, 1998.

\bibitem{Rokhlin}
V.~Rokhlin.
\newblock A ``general'' measure-preserving transformation is not mixing.
\newblock {\em Doklady Akademii Nauk SSSR}, 60:349--351, 1948.

\bibitem{SchnurrWM}
Mike Schnurr.
\newblock Generic properties of extensions.
\newblock 2017.
\newblock https://arxiv.org/abs/1704.03709. To appear in Ergodic Theory and
  Dynamical Systems.

\bibitem{Solecki}
Slawomir Solecki.
\newblock Closed subgroups generated by generic measure automorphisms.
\newblock {\em Ergodic Theory and Dynamical Systems}, 34:1011--1017, 2014.

\end{thebibliography}

\end{document}